\newcommand{\C}{{\mathbb C}}
\newcommand{\ch}{\operatorname{ch}}
\newcommand{\dvol}{\operatorname{dvol}}
\newcommand{\HH}{\operatorname{H}}
\newcommand{\Id}{\operatorname{Id}}
\newcommand{\Pin}{\operatorname{Pin}}
\newcommand{\Q}{{\mathbb Q}}
\newcommand{\R}{{\mathbb R}}
\newcommand{\Ric}{\operatorname{Ric}}
\newcommand{\Spin}{\operatorname{Spin}}
\newcommand{\Z}{{\mathbb Z}}
\numberwithin{equation}{section}
\theoremstyle{plain}
\newtheorem{lemma}[equation]{Lemma}
\newtheorem{theorem}[equation]{Theorem}
\newtheorem{proposition}[equation]{Proposition}
\newtheorem{corollary}[equation]{Corollary}
\theoremstyle{remark}
\newtheorem{remark}[equation]{Remark}
\begin{document}

\title[Index theory for scalar curvature on manifolds with boundary]
      {Index theory for scalar curvature on manifolds with boundary}

\author{John Lott}
\address{Department of Mathematics\\
University of California, Berkeley\\
Berkeley, CA  94720-3840\\
USA} \email{lott@berkeley.edu}

\thanks{Research partially supported by NSF grant
DMS-1810700}
\date{October 1, 2020}
\begin{abstract}
We extend results of Llarull and Goette-Semmelmann to manifolds with boundary.
\end{abstract}

\maketitle

\section{Introduction} \label{sect1}

Llarull showed that the sphere has no Riemannian metric
that is greater than the standard round metric and also has a larger
scalar curvature \cite{Llarull (1996),Llarull (1998)}.
Goette and Semmelmann gave an
extension of Llarull's result in which the sphere is replaced by a
manifold with nonnegative curvature operator
\cite{Goette-Semmelmann (2002)}.

In \cite{Gromov (2019)},
Gromov discussed questions about scalar curvature, including an
extension of these results to manifolds with boundary. We first give an
extension of the Goette-Semmelmann result. Let $R$ denote scalar
curvature and let $H$ denote mean curvature.

\begin{theorem} \label{1.1}
  Let $N$ and $M$ be compact connected
  even dimensional Riemannian manifolds with boundary.
  Let $f \: : \: N \rightarrow M$ be a smooth spin map and let
  $\partial f \: : \: \partial N \rightarrow \partial M$
  denote the restriction to the boundary.
Suppose that
\begin{itemize}
\item $f$ is $\Lambda^2$-nonincreasing and $\partial f$ is
  distance-nonincreasing,
\item $M$ has nonnegative curvature operator and
    $\partial M$ has nonnegative second fundamental form,
    \item 
    $R_N \ge f^* R_M$ and
      $H_{\partial N} \ge (\partial f)^* H_{\partial M}$,
      \item 
        $M$ has nonzero Euler characteristic and
        \item 
          $\int_N \widehat{A}(N) f^*[M, \partial M] \neq 0$.
\end{itemize}
Then $R_N = f^* R_M$ and $H_{\partial N} = (\partial f)^* H_{\partial M}$.

Furthermore,
  \begin{itemize}
\item    If $0 < \Ric_M < \frac12 R_M g_M$ then $f$ is a Riemannian submersion.
\item If $\Ric_M > 0$ and $f$ is distance-nonincreasing then $f$ is a
  Riemannian submersion.
\item If $M$ is flat then $N$ is Ricci-flat.
  \end{itemize}
\end{theorem}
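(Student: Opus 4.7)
The plan is to adapt the Goette--Semmelmann twisted-Dirac argument to manifolds with boundary by combining it with a well-chosen local boundary condition for the Dirac operator. In the interior the setup is standard: form a twist bundle $E$ by pulling back an appropriate spinor bundle of $M$ via $f$, consider the twisted Dirac operator $D^E$ on $S_N \otimes E$, and apply the Lichnerowicz/Weitzenb\"ock identity. Because $f$ is $\Lambda^2$-nonincreasing and $M$ has nonnegative curvature operator, the Goette--Semmelmann curvature estimate gives $\mathcal{R}^E \ge -\tfrac14 f^* R_M$ pointwise as an endomorphism of $S_N \otimes E$, so that $\tfrac14 R_N + \mathcal{R}^E \ge \tfrac14 (R_N - f^*R_M) \ge 0$ by the scalar curvature hypothesis.

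At the boundary I would impose a local chiral boundary condition built from Clifford multiplication by the outward unit normal together with a pullback involution coming from $\partial M$. With this choice the twisted Lichnerowicz identity acquires a boundary integrand of the form
\[
\tfrac12 H_{\partial N} |\psi|^2 + \langle \mathcal{R}^E_{\partial} \psi, \psi \rangle,
\]
where $\mathcal{R}^E_\partial$ is the induced boundary twisting term. A parallel Goette--Semmelmann-type estimate on $\partial N$, exploiting the nonnegative second fundamental form of $\partial M$ together with $\partial f$ being distance-nonincreasing, bounds $\mathcal{R}^E_\partial$ from below by $-\tfrac12 (\partial f)^* H_{\partial M}$, so this boundary integrand is at least $\tfrac12 (H_{\partial N} - (\partial f)^* H_{\partial M}) |\psi|^2 \ge 0$ by hypothesis.

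Next I would compute the index of $D^E$ with this boundary condition via the index theorem for local elliptic boundary problems. The cohomological hypothesis $\int_N \widehat{A}(N) f^*[M, \partial M] \ne 0$ is precisely the topological side of this index formula: the Chern character of $E$, combined with the Euler-characteristic hypothesis $\chi(M) \ne 0$ (which ensures that $\ch(S_M)$ sees the relative fundamental class of $M$), reduces the index to the integral in the hypothesis. Hence $\ind(D^E) \ne 0$, producing a nontrivial $\psi \in \ker D^E$ satisfying the boundary condition. The nonnegativity of both the interior and boundary integrands in the Lichnerowicz identity then forces all inequalities to be equalities, giving $\nabla^E \psi = 0$, $R_N = f^* R_M$, and $H_{\partial N} = (\partial f)^* H_{\partial M}$ at every point where $\psi \ne 0$; parallelism of $\psi$ and its nonvanishing norm extends these to all of $N$ and $\partial N$.

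The main obstacle is the construction and verification of the local boundary condition: one must choose a boundary projection that is elliptic and self-adjoint, compatible with the $\Z/2$-grading so as to define an index, and whose boundary Lichnerowicz term takes exactly the form above with the correct sign for the twisting contribution. Once the rigidity statements are in hand, the three additional conclusions follow by algebraic analysis of the parallel twisted spinor $\psi$: $\nabla^E \psi = 0$ with Clifford-algebra constraints forces $df$ to have full rank wherever $\Ric_M > 0$, and the pinching $0 < \Ric_M < \tfrac12 R_M g_M$ (or $\Ric_M > 0$ together with the distance-nonincreasing hypothesis) promotes $f$ to a Riemannian submersion; in the flat case, $\psi$ descends to an untwisted parallel spinor on $N$, forcing $\Ric_N = 0$.
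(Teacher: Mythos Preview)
Your outline matches the paper's proof essentially step for step: the same twisted Dirac operator on $S_N\otimes f^*S_M$, the Lichnerowicz identity with the Goette--Semmelmann interior estimate and an analogous boundary estimate coming from the second fundamental form, local elliptic boundary conditions yielding nonzero index $\chi(M)\int_N\widehat A(N)\,f^*[M,\partial M]$, and the same rigidity analysis of the resulting parallel twisted spinor. For the two points you correctly flag as the main obstacle, the paper's specific choices are the boundary condition $\psi_{++}=\psi_{--}$, $\psi_{+-}=\psi_{-+}$ in the bigrading of $S_N\otimes f^*S_M$ (this also makes an additional $\int_{\partial N}\langle\psi,D^{\partial N}\psi\rangle$ term in the integrated Lichnerowicz formula---which your stated boundary integrand omits---vanish identically), and the index is computed not by running Atiyah--Bott directly but by identifying the boundary-value problem with the $\Z_2$-invariant part of the Dirac operator on the double $DN$ and applying Kawasaki's orbifold index theorem.
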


Here the map $f$ is spin if $TN \oplus f^* TM$ admits a spin structure.
The class $[M, \partial M] \in \HH^m(M, \partial M; o_M)$ is the fundamental
class in cohomology twisted by the real orientation line bundle $o_M$,
so $f^*[M, \partial M] \in \HH^m(N, \partial N; o_N)$. Also
$\widehat{A}(N) \in \HH^*(N; \Q)$  is the $\widehat{A}$-class and
$\int_N$ denotes pairing with the fundamental class in
$\HH_n(N, \partial N; o_N)$. The quantity
$\int_N \widehat{A}(N) f^*[M, \partial M]$ is called the
$\widehat{A}$-degree in \cite{Gromov-Lawson (1983)}.
When $\partial N = \partial M = \emptyset$, Theorem \ref{1.1} recovers the
Goette-Semmelmann result.

When specialized to the case $\dim(N) = \dim(M)$, we obtain the
following extension of Llarull's result from \cite{Llarull (1998)}.

\begin{corollary} \label{1.2}
  Let $N$ and $M$ be compact connected Riemannian manifolds with boundary
  of the same even dimension.
  Let $f \: : \: N \rightarrow M$ be a smooth spin map and let
  $\partial f \: : \: \partial N \rightarrow \partial M$ denote the
  restriction to the boundary.
  Suppose that
\begin{itemize}
  \item $f$ is $\Lambda^2$-nonincreasing and $\partial f$ is
  distance-nonincreasing,
  \item $M$ has nonnegative curvature operator and
    $\partial M$ has nonnegative second fundamental form,
    \item 
    $R_N \ge f^* R_M$ and
      $H_{\partial N} \ge (\partial f)^* H_{\partial M}$,
      \item 
        $M$ has nonzero Euler characteristic and
        \item 
          $f$ has nonzero degree.
\end{itemize}
Then $R_N = f^* R_M$ and $H_{\partial N} = (\partial f)^* H_{\partial M}$.

Furthermore,
  \begin{itemize}
  \item    If $0 < \Ric_M < \frac12 R_M g_M$ then $f$ is a Riemannian covering
    map.
\item If $\Ric_M > 0$ and $f$ is distance-nonincreasing then $f$ is a
  Riemannian covering map.
\item If $M$ is flat then $N$ is Ricci-flat.
  \end{itemize}
\end{corollary}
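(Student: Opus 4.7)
The plan is to derive Corollary \ref{1.2} directly from Theorem \ref{1.1} by specializing to the case $\dim N = \dim M =: n$. All hypotheses of Theorem \ref{1.1} appear verbatim in Corollary \ref{1.2} except the final one, so the first step is to verify that ``$f$ has nonzero degree'' implies the $\widehat{A}$-degree condition $\int_N \widehat{A}(N) f^*[M, \partial M] \neq 0$. Since $f^*[M, \partial M]$ lies in the top-degree twisted cohomology $\HH^n(N, \partial N; o_N)$, only the degree-zero component of $\widehat{A}(N)$ contributes to the pairing with the fundamental class. That component equals $1$, so the $\widehat{A}$-degree reduces to
\[
\int_N \widehat{A}(N) f^*[M, \partial M] \;=\; \int_N f^*[M, \partial M] \;=\; \deg(f),
\]
which is nonzero by assumption. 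Theorem \ref{1.1} therefore applies and yields $R_N = f^* R_M$ and $H_{\partial N} = (\partial f)^* H_{\partial M}$, as well as Ricci-flatness of $N$ in the flat case.

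The only remaining task is to upgrade the conclusion ``$f$ is a Riemannian submersion'' in the first two bullet points of Theorem \ref{1.1} to ``$f$ is a Riemannian covering map'' in Corollary \ref{1.2}. Here I would use the elementary fact that a Riemannian submersion between Riemannian manifolds of the same dimension is automatically a local isometry: the vertical distribution is zero, so $df$ is a linear isometry at every point. Since $N$ is compact and $\deg(f) \neq 0$, the map $f$ is surjective onto the connected manifold $M$; being a local diffeomorphism from a compact space to a Hausdorff space, it is proper, hence a finite covering map, and the local isometry property makes it a Riemannian covering.

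I do not foresee any real obstacle: this is a bookkeeping corollary whose only substantive content is the collapse of the $\widehat{A}$-degree to the ordinary degree in the equidimensional setting, combined with the standard upgrade from equidimensional Riemannian submersion to Riemannian covering under compactness and nonzero degree.
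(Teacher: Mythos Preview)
Your proposal is correct and matches the paper's approach: the paper presents Corollary \ref{1.2} simply as the specialization of Theorem \ref{1.1} to $\dim N = \dim M$, and you have spelled out precisely the two details this specialization requires---the collapse of the $\widehat{A}$-degree to the ordinary degree and the upgrade from equidimensional Riemannian submersion to Riemannian covering.
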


Gromov proved the first part of Corollary \ref{1.2} when
$M$ is a ball in Euclidean space
\cite[Section 2]{Gromov (2018)},
\cite[Section 3.6]{Gromov (2019)}.
(His interest in this case came from an application to hypersurfaces in
Euclidean space.) Gromov's proof used a geometric doubling
of $N$ and a limiting procedure, to apply the Goette-Semmelmann result.
We apply index theory directly to a Dirac-type operator on $N$, with
local boundary conditions. In general there are topological obstructions
to the existence of local boundary conditions for Dirac-type operators,
but in our case the obstruction vanishes.  The proof of Theorem \ref{1.1}
effectively uses an analytic doubling argument.

Gromov asked what happens to the Dirac-type operator in his argument when
one passes to the limit.  Presumably one recovers the operator that we use.

In addition to local boundary conditions, one can consider nonlocal
Atiyah-Patodi-Singer boundary conditions \cite{Atiyah-Patodi-Singer (1975)}.
This leads to the following result.

\begin{theorem} \label{1.3}
  Let $N$ and $M$ be compact connected Riemannian manifolds with boundary
  of the same even dimension.
  Let $f \: : \: N \rightarrow M$ be a smooth spin map and let
  $\partial f \: : \: \partial N \rightarrow \partial M$ denote the
  restriction to the boundary.
  Suppose that
\begin{itemize}
\item $f$ is $\Lambda^2$-nonincreasing,
\item $\partial f$ is an isometry and preserves the
  second fundamental forms,
    \item $M$ has nonnegative curvature operator and
    $\partial M$ has vanishing mean curvature, and
    \item 
    $R_N \ge f^* R_M$.
\end{itemize}
Then $R_N = f^* R_M$ and $H_{\partial N} = (\partial f)^* H_{\partial M}$.

Furthermore,
  \begin{itemize}
  \item    If $0 < \Ric_M < \frac12 R_M g_M$ then $f$ is an isometry.
    \item If $\Ric_M > 0$ and $f$ is distance-nonincreasing then $f$ is an
isometry.
\item If $M$ is flat then $N$ is Ricci-flat.
  \end{itemize}
\end{theorem}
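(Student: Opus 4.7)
The plan is to prove Theorem \ref{1.3} by the same strategy as Theorem \ref{1.1}, replacing local boundary conditions on a twisted Dirac operator with nonlocal Atiyah-Patodi-Singer boundary conditions. Let $E = f^* S_M$ denote the pullback of the spinor bundle of $M$; since $f$ is spin, the Dirac operator $D$ on $N$ acting on sections of $S_N \otimes E$ is well-defined. Near $\partial N$, write $D$ in its collar normal form $\sigma(\partial_u + A)$ with $A$ the induced self-adjoint boundary operator, and impose the APS condition that boundary values lie in the nonnegative spectral subspace of $A$.

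First I would establish a Weitzenböck estimate on the APS domain. Because $M$ has nonnegative curvature operator and $f$ is $\Lambda^2$-nonincreasing, the Goette-Semmelmann pointwise bound on the twisting curvature operator gives, after combining with the Lichnerowicz formula and Green's identity on the collar, an inequality of the shape
\begin{equation*}
\int_N |D\psi|^2 \, dv \;\ge\; \int_N \tfrac14 \bigl(R_N - f^* R_M\bigr) |\psi|^2 \, dv \;+\; \int_{\partial N} \tfrac12 \bigl(H_{\partial N} - (\partial f)^* H_{\partial M}\bigr) |\psi|^2 \, da
\end{equation*}
for $\psi$ in the APS domain; the nonlocal boundary condition absorbs the remaining APS boundary term, and the hypothesis $H_{\partial M} = 0$ together with $\partial f$ preserving second fundamental forms forces $H_{\partial N} = (\partial f)^* H_{\partial M} = 0$, so the boundary integrand vanishes. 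Under $R_N \ge f^* R_M$, the right-hand side is then manifestly nonnegative.

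Next I would apply the APS index theorem to produce a nonzero harmonic spinor in the APS domain. The bulk contribution is $\int_N \widehat{A}(N) \, \mathrm{ch}(E)$, and the isometric character of $\partial f$, together with the matching of second fundamental forms, should allow the boundary eta-invariant of $D$ on $\partial N$ to be identified (via unitary identification of $\partial N$ with $\partial M$ using $\partial f$) with the corresponding eta-invariant on $\partial M$. This matching feeds into a relative index computation analogous to the doubling argument: after reinterpreting the APS index as a relative index between $(N,\partial N)$ and $(M,\partial M)$, it reduces to an integer that plays the role of the $\widehat{A}$-degree in Theorem \ref{1.1}. Once a nonzero harmonic spinor is produced, the Weitzenböck inequality becomes equality pointwise, giving $R_N = f^* R_M$, and the boundary equality follows from the hypotheses.

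The rigidity conclusions then follow by running the Goette-Semmelmann equality analysis on the curvature endomorphism: pointwise equality in the Kato/Weitzenböck inequality, combined with the pinching hypothesis on $\Ric_M$ or with $f$ distance-nonincreasing and $\Ric_M > 0$, forces $f$ to be a local isometry (hence an isometry, since $\dim N = \dim M$ and the boundary map is an isometry), while if $M$ is flat the curvature endomorphism vanishes and the spinor is parallel, forcing $N$ to be Ricci-flat. I expect the main obstacle to be the second step: the bookkeeping required to identify the APS eta-invariant contributions on $\partial N$ and $\partial M$ and thereby extract a nonvanishing topological index without a separately stated hypothesis on the $\widehat{A}$-degree. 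The strong assumption that $\partial f$ is an isometry preserving second fundamental forms (replacing mere distance-nonincrease from Theorem \ref{1.1}) is precisely what should make this identification clean.
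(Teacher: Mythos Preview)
Your plan matches the paper's approach: twisted Dirac operator with APS-type boundary conditions, a Bochner identity whose boundary term is nonnegative under the spectral projection, then an index computation to produce the harmonic spinor, with rigidity via the Goette--Semmelmann equality analysis (Proposition~\ref{2.16}).

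The step you flag as the obstacle is exactly where the content lies, and the paper's resolution is the following concrete comparison. Take the reference case $N' = M$, $f' = \operatorname{id}_M$. There $E \cong \Lambda^* T^*M$ with the even/odd grading, and the constant function $1$ is a nonzero parallel section of $E_+$; by the Bochner argument the kernel of the adjoint is zero (a parallel section of $E_-$ is annihilated by the boundary condition $P^{\le 0}\psi_- = 0$), so the reference index is strictly positive. Because $\partial f$ is an isometry preserving second fundamental forms, the boundary operator $\widetilde{\slashed\partial}^{\partial N}$ for $N$ is \emph{literally the same} as for $N'$ after identifying boundaries via $\partial f$, so the eta contributions cancel in the difference of APS indices. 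What remains is
\[
\Delta \;=\; \int_N \widehat A(TN)\, f^*\ch(S_M) \;-\; \int_M \widehat A(TM)\, \ch(S_M).
\]
Since $\ch(S_M)$ is the Euler form and has top degree, the $\widehat A$-factors are irrelevant and $\Delta = (\deg f - 1)\int_M e(M)$. But $\partial f$ is a diffeomorphism, forcing $\deg f = 1$, so $\Delta = 0$ and the index on $N$ is positive. This is why no separate $\widehat A$-degree hypothesis is needed.

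One small correction: the boundary contribution in your Weitzenb\"ock inequality does not literally take the form $\tfrac12(H_{\partial N} - (\partial f)^* H_{\partial M})|\psi|^2$. The second fundamental form of $M$ enters as the Clifford term $\widehat A_{b\mu}\gamma^n\gamma^\mu\widehat\gamma^m\widehat\gamma^b$ (cf.\ (\ref{2.5})), and the paper absorbs it into the boundary operator $\widetilde D^{\partial N}$ itself rather than estimating it against $H_{\partial M}$. This is what makes the APS spectral condition yield $\int_{\partial N}\langle\psi,\widetilde D^{\partial N}\psi\rangle \ge 0$ cleanly; since $H_{\partial N}=0$ under the hypotheses, the remaining mean-curvature term vanishes outright.
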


Comparing Corollary \ref{1.2} and Theorem \ref{1.3}, one difference is that
Corollary \ref{1.2} assumes nonnegativity of the second fundamental form of $M$,
while Theorem \ref{1.3} assumes vanishing of its trace.
In Corollary \ref{1.2}
the boundary map is assumed to be distance nonincreasing,
while in Theorem \ref{1.3} it is actually an isometry and it
preserves the second fundamental form.  

I thank Dan Freed and Chao Li for correspondence.

\section{Proof of Theorem \ref{1.1}}

\subsection{Bochner-type argument}

For simplicity, we assume that $N$ and $M$ are spin; the general case is
similar.
Put $E = S_N \otimes f^* S_M$, a Clifford module on $N$.
(This Clifford module exists in the general case.)
We take the inner product $\langle \cdot, \cdot \rangle$ on $E$
to be $\C$-linear in the
second slot and $\C$-antilinear in the first slot.

Let $\omega^\alpha_{\: \: \beta \gamma}$ be the connection $1$-forms
with respect to a local orthonormal framing
$\{e_\alpha\}_{\alpha = 1}^n$ on $N$. Let
$\widehat{\omega}^a_{\: \: b \gamma}$ be the pullbacks under $f$ of
connection $1$-forms with respect to a local orthonormal framing
$\{e_a\}_{a = 1}^m$ of $M$.

  Let $\{\gamma^\alpha\}_{\alpha = 1}^n$ be generators of the Clifford algebra
  on $\R^n$,
  satisfying $\gamma^\alpha \gamma^\beta + \gamma^\beta \gamma^\alpha =
  2 \delta^{\alpha \beta}$.
  Let $\{\widehat{\gamma}^a \}_{a = 1}^m$ be the analogous generators of the
  Clifford algebra on
  $\R^m$.
The covariant derivative on $E$ has the local form
\begin{equation} \label{2.1}
\nabla^N_\sigma = e_\sigma + \frac{1}{8} \omega_{\alpha \beta \sigma}
[\gamma^\alpha, \gamma^\beta] +
\frac{1}{8} \widehat{\omega}_{ab \sigma}
     [\widehat{\gamma}^a, \widehat{\gamma}^b].
     \end{equation}
The Dirac operator on $C^\infty(N; E)$ is
$D^N = - i \sum_{\sigma=1}^n \gamma^\sigma \nabla^N_{\sigma}$.

We will take the orthonormal frame $\{e_\alpha\}$ at a point in
$\partial N$ so that $e_n$ is the inward-pointing unit normal vector
there. Let $\dvol_N$ denote the Riemannian density on $N$, and
similarly for $\dvol_{\partial N}$.
Given $\psi_1, \psi_2 \in C^\infty(N; E)$, we have
\begin{equation} \label{2.2}
\int_N \langle D^N \psi_1, \psi_2 \rangle \dvol_N -
\int_N \langle \psi_1, D^N \psi_2 \rangle \dvol_N  =
- i \int_{\partial N} \langle \psi_1, \gamma^n \psi_2 \rangle \dvol_N.
\end{equation}
The Lichnerowicz formula implies
\begin{equation} \label{2.3}
  (D^N)^2 = (\nabla^N)^* \nabla^N + \frac{R_N}{4} - \frac14 
  [\gamma^\sigma, \gamma^\tau] \left(
\frac18 \widehat{R}_{ab \sigma \tau} [\widehat{\gamma}^a,
  \widehat{\gamma}^b] \right).
\end{equation}

We now extend some computations in \cite[Proof of Lemma 4.1]{Lott (2000)}.
Suppose that $D^N \psi = 0$. Then (\ref{2.3}) implies that
\begin{align} \label{2.4}
0 = & 
  \int_N |\nabla^N \psi |^2 \: \dvol_N +
\int_{\partial N} \langle \psi, \nabla^N_{e_n} \psi \rangle
  + \frac14 \int_N R_N |\psi|^2 \: \dvol_N - \\
  & \frac{1}{32} \int_N \widehat{R}_{ab \sigma \tau} \langle \psi,
           [\gamma^\sigma, \gamma^\tau] [\widehat{\gamma}^a,
  \widehat{\gamma}^b] \psi \rangle. \notag
  \end{align}
Now $D^N \psi = 0$ implies that on $\partial N$, we have
\begin{align} \label{2.5}
  \nabla^N_{e_n} \psi  = & - \gamma^n \sum_{\mu = 1}^{n-1}
  \gamma^\mu \nabla^N_\mu \psi \\
   = &
  - \gamma^n \sum_{\mu = 1}^{n-1} \gamma^\mu
  \left( \nabla^{\partial N}_\mu \psi + \frac14 \omega_{n\beta \mu} \gamma^n
  \gamma^\beta \psi + \frac14 \widehat{\omega}_{mb\mu}
  \widehat{\gamma}^m \widehat{\gamma}^b \psi \right) \notag \\
  = & D^{\partial N} \psi + \frac{H_{\partial N}}{4} \psi \:  - \:
  \frac14 \gamma^n \gamma^\mu \widehat{\gamma}^m \widehat{\gamma}^b
  \widehat{A}_{b \mu} \psi, \notag
\end{align}
where
\begin{equation} \label{2.6}
D^{\partial N} =  - \gamma^n \sum_{\mu = 1}^{n-1} \gamma^\mu
   \nabla^{\partial N}_\mu
\end{equation}
is the Dirac operator on $\partial N$ coupled to
$(\partial f)^* S_M$, $\widehat{A}$ is the second fundamental form
of $M$ and $\widehat{A}_{b \mu} = \widehat{A}(\widehat{e}_b,
(\partial f)_*(e_\mu))$.
Hence
\begin{align} \label{2.7}
0 = & 
  \int_N |\nabla \psi |^2 \: \dvol_N 
  + \frac14 \int_N R_N |\psi|^2 \: \dvol_N -
  \frac{1}{32} \int_N \widehat{R}_{ab \sigma \tau}  \langle \psi,
          [\gamma^\sigma, \gamma^\tau] [\widehat{\gamma}^a,
            \widehat{\gamma}^b] \psi \rangle \dvol_N +  \\
          &
          \int_{\partial N} \langle \psi, D^{\partial N} \psi \rangle \dvol_{\partial N}
          +
          \frac14 \int_{\partial N} H_{\partial N}
          |\psi|^2 \dvol_{\partial N} - \frac14
          \int_{\partial N} \widehat{A}_{b \mu}
          \langle \psi, \gamma^n \gamma^\mu \widehat{\gamma}^m
          \widehat{\gamma}^b
\psi \rangle \dvol_{\partial N}. \notag
\end{align}

From \cite[Section 1.1]{Goette-Semmelmann (2002)}, 
\begin{equation} \label{2.8}
\frac{1}{32}  \widehat{R}_{ab \sigma \tau} 
          [\gamma^\sigma, \gamma^\tau] [\widehat{\gamma}^a,
            \widehat{\gamma}^b] \le \frac14 f^* R_M \Id_E.
  \end{equation}

\begin{lemma} \label{2.9} If $\widehat{A} \ge 0$ then
  \begin{equation} \label{2.10}
\widehat{A}_{b \mu}
           \gamma^n \gamma^\mu \widehat{\gamma}^m
          \widehat{\gamma}^b \le (\partial f)^* H_{\partial M} \Id_E.
\end{equation}
\end{lemma}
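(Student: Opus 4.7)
The plan is to diagonalize $\widehat{A}$ and reduce (\ref{2.10}) to a pointwise Clifford-algebraic inequality, controlled by the distance-nonincreasing bound on $\partial f$ that is part of the hypotheses of Theorem \ref{1.1}. Fix $x \in \partial N$ with $f(x) \in \partial M$, and choose a tangential orthonormal frame $\{\widehat{e}_b\}_{b=1}^{m-1}$ of $T_{f(x)} \partial M$ diagonalizing $\widehat{A}$, with principal curvatures $\widehat{\lambda}_b \ge 0$ satisfying $(\partial f)^* H_{\partial M} = \sum_b \widehat{\lambda}_b$. Writing $J_{b\mu}$ for the matrix of $(\partial f)_*$ in the frames $\{e_\mu\}_{\mu=1}^{n-1}$ and $\{\widehat{e}_b\}_{b=1}^{m-1}$, we get $\widehat{A}_{b\mu} = \widehat{\lambda}_b J_{b\mu}$, and the distance-nonincreasing hypothesis forces $JJ^T \le \Id$, so that $\|J_b\|^2 := \sum_\mu J_{b\mu}^2 \le 1$ for each $b$.

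Next, set $V_b := \sum_{\mu=1}^{n-1} J_{b\mu} \gamma^\mu$ on $E$. The Clifford relations $\gamma^\mu \gamma^\nu + \gamma^\nu \gamma^\mu = 2\delta^{\mu\nu}$ yield $V_b^* = V_b$, $V_b^2 = \|J_b\|^2 \Id_E$, and $\gamma^n V_b + V_b \gamma^n = 0$. Using that the $\widehat{\gamma}$'s commute with the $\gamma$'s on $E = S_N \otimes f^* S_M$, the left-hand side of (\ref{2.10}) rewrites as $\sum_b \widehat{\lambda}_b P_b$ with $P_b := \widehat{\gamma}^m \widehat{\gamma}^b \gamma^n V_b$. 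A short sign check, using $(\widehat{\gamma}^m \widehat{\gamma}^b)^2 = -\Id$ and the anticommutation of $\gamma^n$ with $V_b$, shows $P_b^* = P_b$ and $P_b^2 = \|J_b\|^2 \Id_E$; since $P_b$ is self-adjoint with square $\|J_b\|^2 \Id_E$, its eigenvalues are $\pm \|J_b\|$, so $P_b \le \|J_b\| \Id_E \le \Id_E$. Since $\widehat{\lambda}_b \ge 0$, summing gives $\sum_b \widehat{\lambda}_b P_b \le \sum_b \widehat{\lambda}_b \Id_E = (\partial f)^* H_{\partial M} \Id_E$, which is (\ref{2.10}).

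The main obstacle is the Clifford bookkeeping: verifying $P_b^* = P_b$ and $P_b^2 = \|J_b\|^2 \Id_E$ requires carefully tracking the sign changes when $\gamma^n$ passes through $V_b$ and when $\widehat{\gamma}^m \widehat{\gamma}^b$ commutes past $\gamma^n V_b$. Conceptually the inequality reduces to the scalar comparison $\widehat{\lambda}_b \|J_b\| \le \widehat{\lambda}_b$ once one recognizes $V_b$ as Clifford multiplication in $S_N$ by the preimage of the $b$-th principal direction of $\partial M$. Only the distance-nonincreasing property of $\partial f$ and the nonnegativity of $\widehat{A}$ are used here; the $\Lambda^2$-nonincreasing hypothesis on $f$ in the interior plays no role in this boundary estimate.
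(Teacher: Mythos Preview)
Your argument is correct, and it takes a genuinely different route from the paper's proof. The paper follows the Goette--Semmelmann template: it writes $\widehat{L}=\sqrt{\widehat{A}}$ and uses the polarization identity
\[
\widehat{A}_{b\mu}\gamma^n\gamma^\mu\widehat{\gamma}^m\widehat{\gamma}^b
=\tfrac12\sum_a\Bigl[(X_a+Y_a)^2-X_a^2-Y_a^2\Bigr],
\]
with $X_a=\widehat{L}_{ab}\widehat{\gamma}^m\widehat{\gamma}^b$ and $Y_a=\widehat{L}_{ac}\langle e_c,(\partial f)_*e_\mu\rangle\gamma^n\gamma^\mu$; since $X_a+Y_a$ is skew-Hermitian its square is nonpositive, while $X_a^2=-(\partial f)^*H_{\partial M}$ and $Y_a^2\ge-(\partial f)^*H_{\partial M}$ by the distance-nonincreasing hypothesis. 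You instead diagonalize $\widehat{A}$, isolate the self-adjoint operators $P_b=\widehat{\gamma}^m\widehat{\gamma}^b\gamma^n V_b$, and bound each by $\|J_b\|\,\Id\le\Id$ via $P_b^2=\|J_b\|^2\Id$. Your approach avoids the square-root and the completing-the-square trick, at the cost of choosing a special frame; the paper's approach is frame-free but leans on the slightly less transparent skew-Hermitian observation. Both use exactly the same two inputs, $\widehat{A}\ge0$ and $\partial f$ distance-nonincreasing, and both give the sharp constant. One small remark: your passage from ``$\partial f$ distance-nonincreasing'' to ``$JJ^T\le\Id$'' is fine, but what is immediate is $J^TJ\le\Id$; the row bound $\sum_\mu J_{b\mu}^2\le1$ then follows because $JJ^T$ and $J^TJ$ share nonzero eigenvalues (equivalently, the operator norm of $J$ is at most $1$). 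It may be worth saying this explicitly.
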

\begin{proof}
We use the method of proof of \cite[Section 1.1]{Goette-Semmelmann (2002)}.
  Put $\widehat{L} = \sqrt{\widehat{A}}$. Then
  \begin{equation} \label{2.11}
\widehat{A}_{b \mu}
           \gamma^n \gamma^\mu \widehat{\gamma}^m
           \widehat{\gamma}^b  =
\widehat{A}_{bc} \langle e_c, (\partial f)_*(e_\mu) \rangle
           \gamma^n \gamma^\mu \widehat{\gamma}^m
           \widehat{\gamma}^b=
           \widehat{L}_{ab}
           \widehat{L}_{ac} \langle e_c, (\partial f)_*(e_\mu) \rangle
           \gamma^n \gamma^\mu \widehat{\gamma}^m
           \widehat{\gamma}^b,
  \end{equation}
  so
\begin{align} \label{2.12}
&  \widehat{A}_{b \mu}
           \gamma^n \gamma^\mu \widehat{\gamma}^m
           \widehat{\gamma}^b  = \\
&           \frac12 \sum_a \left[
           \left(
           \widehat{L}_{ab} \widehat{\gamma}^m
           \widehat{\gamma}^b + \widehat{L}_{ac} \langle e_c, (\partial f)_*(e_\mu) \rangle
           \gamma^n \gamma^{\mu} 
           \right)^2 - \left(
           \widehat{L}_{ab} \widehat{\gamma}^m
           \widehat{\gamma}^b \right)^2 -
           \left( \widehat{L}_{ac} \langle e_c, (\partial f)_*(e_\mu) \rangle
           \gamma^n \gamma^{\mu} 
           \right)^2
           \right]. \notag
\end{align}
Now $\widehat{L}_{ab} \widehat{\gamma}^m
           \widehat{\gamma}^b + \widehat{L}_{ac} \langle e_c, (\partial f)_*(e_\mu) \rangle
           \gamma^n \gamma^{\mu}$ is skew-Hermitian, so has
           nonpositive square. Also,
           \begin{equation} \label{2.13}
             \left(
           \widehat{L}_{ab} \widehat{\gamma}^m
           \widehat{\gamma}^b \right)^2 = -
           \widehat{L}_{ab}^2 = - \widehat{A}_{bb} = - (\partial f)^*
           H_{\partial M}
           \end{equation}
           and
           \begin{align} \label{2.14}
             \left( \widehat{L}_{ac} \langle e_c, (\partial f)_*(e_\mu) \rangle
           \gamma^n \gamma^{\mu} 
           \right)^2 = & -
           \left( \widehat{L}_{ac} \langle e_c, (\partial f)_*(e_\mu) \rangle \right)^2
           = - \widehat{A}_{cd} \langle e_c, (\partial f)_*(e_\mu) \rangle
           \langle e_d, (\partial f)_*(e_\mu) \rangle \\
           = & -
           \widehat{A}((\partial f)_*(e_\mu),(\partial f)_*(e_\mu)) \ge -
           (\partial f)^* H_{\partial M}, \notag
           \end{align}
           using the fact that $\partial f$ is distance-nonincreasing.

           This proves the lemma.
  \end{proof}

\begin{proposition} \label{2.15}
  Let $N$ and $M$ be compact connected
  even dimensional Riemannian manifolds with boundary.
  Let $f \: : \: N \rightarrow M$ be a smooth spin map and let
  $\partial f \: : \: \partial N \rightarrow \partial M$
  denote the restriction to the boundary.
Suppose that
\begin{itemize}
\item $f$ is $\Lambda^2$-nonincreasing and $\partial f$ is
  distance-nonincreasing,
\item $M$ has nonnegative curvature operator and
    $\partial M$ has nonnegative second fundamental form,
    \item 
    $R_N \ge f^* R_M$ and
      $H_{\partial N} \ge (\partial f)^* H_{\partial M}$, and
      \item 
        There is a nonzero $\psi \in C^\infty(N; E)$ with
        $D^N \psi = 0$ on $N$ and
        $\int_{\partial N} \langle \psi, D^{\partial N} \psi \rangle \:
        \dvol_{\partial N} \ge 0$ on $\partial N$.
\end{itemize}
Then $R_N = f^* R_M$, $H_{\partial N} = (\partial f)^* H_{\partial M}$ and
$\psi$ is parallel.
\end{proposition}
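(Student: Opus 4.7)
The plan is to take the integral identity (2.7), which already packages the Lichnerowicz formula together with the boundary analysis used to rewrite $\nabla^N_{e_n}\psi$ via (2.5), and to show that under the stated hypotheses every summand on its right-hand side is nonnegative. Since they sum to zero, each term must vanish individually, and these vanishings will yield all three conclusions.

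Concretely, I would first apply the Goette-Semmelmann pointwise estimate (2.8) and use $R_N \ge f^* R_M$ to bound the two bulk curvature terms in (2.7) from below by $\tfrac{1}{4}\int_N (R_N - f^* R_M)|\psi|^2\,\dvol_N \ge 0$. Next I would apply Lemma \ref{2.9} (whose hypothesis $\widehat{A} \ge 0$ is exactly the nonnegative second fundamental form assumption on $\partial M$) together with $H_{\partial N} \ge (\partial f)^* H_{\partial M}$ to bound the two boundary curvature terms from below by $\tfrac{1}{4}\int_{\partial N} (H_{\partial N} - (\partial f)^* H_{\partial M})|\psi|^2\,\dvol_{\partial N} \ge 0$. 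The remaining summands $\int_N |\nabla^N \psi|^2 \,\dvol_N$ and $\int_{\partial N} \langle \psi, D^{\partial N}\psi\rangle \,\dvol_{\partial N}$ are nonnegative, the first obviously and the second by the fourth bullet of the hypothesis.

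Since the sum of these nonnegative quantities equals zero, each vanishes. From $\int_N |\nabla^N \psi|^2 = 0$ I conclude that $\psi$ is parallel; since $\psi$ is not identically zero and $\nabla^N \psi = 0$ implies $d|\psi|^2 = 0$, the function $|\psi|^2$ is a strictly positive constant on $N$. With $|\psi|^2$ nowhere vanishing, the integrated equalities $\int_N (R_N - f^* R_M)|\psi|^2 = 0$ and $\int_{\partial N} (H_{\partial N} - (\partial f)^* H_{\partial M})|\psi|^2 = 0$ promote to the pointwise equalities $R_N = f^* R_M$ on $N$ and $H_{\partial N} = (\partial f)^* H_{\partial M}$ on $\partial N$, completing the proof. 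No step is a genuine obstacle; the proposition is essentially a bookkeeping exercise assembling (2.7), (2.8), and Lemma \ref{2.9}, with the only subtlety being the passage from integrated to pointwise equality, which is precisely where parallelism of $\psi$ is used.
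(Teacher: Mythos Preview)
Your proposal is correct and is precisely the argument the paper has in mind: the paper's proof is the single line ``This follows from (\ref{2.7}), (\ref{2.8}) and Lemma \ref{2.9},'' and you have simply unpacked that line, including the small point about $|\psi|^2$ being a positive constant to pass from integrated to pointwise equality.
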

\begin{proof}
This follows from (\ref{2.7}), (\ref{2.8}) and Lemma \ref{2.9}. 
\end{proof}

\begin{proposition} \label{2.16}
  Suppose that the assumptions of Proposition \ref{2.15} hold.
  \begin{enumerate}
\item    If $0 < \Ric_M < \frac12 R_M g_M$ then $f$ is a Riemannian submersion.
\item If $\Ric_M > 0$ and $f$ is distance-nonincreasing then $f$ is a
  Riemannian submersion
\item If $M$ is flat then $N$ is Ricci-flat.
  \end{enumerate}
\end{proposition}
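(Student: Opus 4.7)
The plan is to combine Proposition \ref{2.15} with the pointwise linear-algebra analysis from \cite{Goette-Semmelmann (2002)}. Proposition \ref{2.15} furnishes a parallel section $\psi \in C^\infty(N; E)$ together with $R_N = f^* R_M$ and $H_{\partial N} = (\partial f)^* H_{\partial M}$. Because $\psi$ is parallel, $|\psi|$ is constant, so $\psi$ is nowhere vanishing. Substituting the scalar- and mean-curvature equalities into (\ref{2.7}) and using $|\nabla^N \psi|^2 \equiv 0$, every nonnegative integrand on the right-hand side must vanish identically; in particular the inequality (\ref{2.8}) is saturated on $\psi$ pointwise:
\begin{equation} \label{satur}
\frac{1}{32}\widehat{R}_{ab\sigma\tau}[\gamma^\sigma,\gamma^\tau][\widehat{\gamma}^a,\widehat{\gamma}^b]\psi \: = \: \frac{1}{4} f^* R_M \cdot \psi \quad \text{at every } x \in N.
\end{equation}

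For parts (1) and (2), I would invoke the pointwise analysis of \cite[Section 1]{Goette-Semmelmann (2002)}. Under the hypothesis $0 < \Ric_M < \frac{1}{2} R_M g_M$ with $f$ $\Lambda^2$-nonincreasing (respectively, $\Ric_M > 0$ with $f$ distance-nonincreasing), saturation of the curvature inequality on a nonzero spinor at $x$ forces $df_x$ to be surjective and to restrict to a linear isometry from $(\ker df_x)^\perp$ onto $T_{f(x)} M$. Because (\ref{satur}) holds at every point, this gives the Riemannian submersion conclusion.

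For part (3), flatness of $M$ gives $f^* R^{S_M} = 0$, so the pulled-back spin connection on $f^* S_M$ is locally flat. Near each point of $N$ I would choose a parallel local trivialization $\{s_i\}$ of $f^* S_M$ and write $\psi = \sum_i \psi_i \otimes s_i$. Since $\psi$ and each $s_i$ are parallel, each $\psi_i$ is a locally parallel section of $S_N$, and at least one $\psi_i$ is nonzero on the chart. Applying the curvature endomorphism $R^{S_N}$ to such a local nonzero parallel spinor and contracting by Clifford multiplication in the standard way yields $\Ric_N = 0$ on the chart, hence on all of $N$.

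The main obstacle is the pointwise linear-algebra step behind parts (1) and (2); however, once Proposition \ref{2.15} has produced a parallel $\psi$ with $R_N = f^* R_M$, the saturation condition (\ref{satur}) is pointwise identical to the one already handled for closed manifolds in \cite{Goette-Semmelmann (2002)}, so that analysis applies without modification and the boundary hypotheses play no further role.
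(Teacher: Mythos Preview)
Your proposal is correct and follows essentially the same route as the paper: parts (1) and (2) are delegated to the pointwise analysis in \cite[Section 1.2 and Remark 1.2]{Goette-Semmelmann (2002)}, and part (3) proceeds by locally trivializing $f^*S_M$ by parallel sections, extracting a nonzero parallel spinor component in $S_N$, and invoking the standard fact (stated in the paper as \cite[Corollary 2.8]{BHMMM (2015)}) that a parallel spinor forces Ricci-flatness. Your explicit derivation of the pointwise saturation \eqref{satur} from (\ref{2.7}) is a helpful elaboration, but the argument is otherwise the same.
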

\begin{proof}
  Part (1) follows from the computation in
  \cite[Section 1.2]{Goette-Semmelmann (2002)}. Part (2) follows from
  \cite[Remark 1.2]{Goette-Semmelmann (2002)}. For part (3), we know that
  $S_M$ has a flat unitary connection.  Around a point $p \in N$, we can
  write $\psi = \sum_{a} \psi^a s_a$, where $\{s_a\}$ is a parallel basis
  of $f^* S_M$ and $\psi^a$ is a local section of $S_N$.
  Then $\nabla \psi = 0$ implies that
  $R^N_{\alpha \beta \sigma \tau} [\gamma^\alpha, \gamma^\beta] \psi^a = 0$
  for each $a$. As some $\psi^a$ is nonzero, it follows from
\cite[Corollary 2.8]{BHMMM (2015)} that $N$ is Ricci-flat near $p$. 
  \end{proof}

\begin{remark} \label{2.17}
  Under the assumptions of Proposition \ref{2.15}, if $M$ is flat and spin then
  we can say more precisely that the universal cover of $N$ admits a
  nonzero parallel spinor field.
  \end{remark}

\subsection{Local boundary conditions}

We represent the generators of the Clifford algebra as
\begin{equation} \label{2.18}
  \gamma^n =
  \begin{pmatrix}
    0 & -i \\
    i & 0
    \end{pmatrix},
  \gamma^\mu =
  \begin{pmatrix}
    0 & \sigma^{\mu} \\
    \sigma^{\mu} & 0
    \end{pmatrix},
\end{equation}
where $\{\sigma^\mu \}_{\mu = 1}^{n-1}$ are generators for the
Clifford algebra on $\R^{n-1}$. Put
\begin{equation} \label{2.19}
\slashed{\partial}^{\partial N}
= - i \sum_{\mu = 1}^{n-1} \sigma^{\mu} \nabla^{\partial N}_\mu.
\end{equation}
With respect to the $\Z_2$-grading on $E$ coming from $S_N$, we have
\begin{equation} \label{2.20}
D^{\partial N} =    \begin{pmatrix}
    - \slashed{\partial}^{\partial N} & 0 \\
    0 & \slashed{\partial}^{\partial N}
    \end{pmatrix}. 
  \end{equation}

  As both
  $S_N$ and $S_M$ are $\Z_2$-graded, there is a total $\Z_2$-grading
  on $E$ and a bigrading
\begin{align} \label{2.21}
  E_+ & = E_{++} \oplus E_{--}, \\
  E_- & = E_{+-} \oplus E_{-+}. \notag
\end{align}
Given $\psi \in C^\infty(N; E)$, we decompose it as
$\psi = \psi_{++} + \psi_{--}
+ \psi_{+-} + \psi_{-+}$.
Define boundary conditions by
\begin{equation} \label{2.22}
  \psi_{++} = \psi_{--}, \: \: \: \psi_{+-} = \psi_{-+}.
  \end{equation}
Note that the boundary conditions do not mix $E_+$ and $E_-$.
\begin{lemma} \label{2.23}
  The operator $D^N$ is formally self-adjoint under the boundary
  conditions (\ref{2.22}).
  \end{lemma}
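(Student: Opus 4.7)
The plan is to use the integration-by-parts identity (\ref{2.2}): formal self-adjointness of $D^N$ subject to (\ref{2.22}) reduces to showing that the boundary pairing
\[
\int_{\partial N} \langle \psi_1, \gamma^n \psi_2 \rangle \, \dvol_{\partial N} = 0
\]
for all $\psi_1, \psi_2 \in C^\infty(N; E)$ satisfying (\ref{2.22}). I would show that the integrand vanishes pointwise on $\partial N$.

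The key computation uses the matrix representation (\ref{2.18}): $\gamma^n$ anticommutes with the $S_N$-grading (exchanging the two diagonal blocks, with factors of $\pm i$) and acts trivially on the $f^* S_M$ factor, so it sends $E_{ab}$ into $E_{-a,b}$. Decomposing $\psi_1$ and $\psi_2$ according to the bigrading (\ref{2.21}) and using the orthogonality of distinct bigraded summands under the hermitian pairing, $\langle \psi_1, \gamma^n \psi_2 \rangle$ reduces to exactly four cross-terms, each of the form $\pm i \langle \psi_{1,ab}, \psi_{2,-a,b} \rangle$, with signs dictated by (\ref{2.18}) and the antilinearity convention for $\langle\cdot,\cdot\rangle$.

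Next, I would apply the boundary conditions (\ref{2.22}) to both $\psi_i$. Each substitution $\psi_{i,++} = \psi_{i,--}$ or $\psi_{i,+-} = \psi_{i,-+}$ converts a given cross-term into another one already appearing in the expansion, but with opposite sign; for instance, $-i\langle \psi_{1,++},\psi_{2,-+}\rangle$ becomes $-i\langle \psi_{1,--},\psi_{2,+-}\rangle$, which cancels $+i\langle\psi_{1,--},\psi_{2,+-}\rangle$. The four terms thus pair off and cancel, and the integrand vanishes identically.

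I expect the main obstacle to be not substantive but bookkeeping: one must keep straight the factors of $\pm i$ coming from (\ref{2.18}), the antilinear-first-slot convention for the hermitian form on $E$, and the implicit identifications on $\partial N$ that make (\ref{2.22}) a meaningful equation between sections of a priori distinct subbundles (concretely, $S_N^+|_{\partial N}$ with $S_N^-|_{\partial N}$ via (\ref{2.18}), and $f^* S_M^+|_{\partial N}$ with $f^* S_M^-|_{\partial N}$ via the analogous matrix form of Clifford multiplication by the inward normal to $\partial M$, pulled back through $f$). Conceptually, (\ref{2.22}) is engineered precisely so that the pair of cross-terms produced by $\gamma^n$ are exchanged with sign reversal by the boundary substitutions.
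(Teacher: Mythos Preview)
Your proposal is correct and follows essentially the same approach as the paper: reduce via (\ref{2.2}) to the boundary pairing, decompose into bigraded components, observe that $\gamma^n$ flips only the first index, and use (\ref{2.22}) to see that the four resulting cross-terms cancel in pairs. The paper simply writes out the four terms explicitly as in your example and concludes.
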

\begin{proof}
  Using (\ref{2.2}), it suffices to show that if
  $\psi^1, \psi^2 \in C^\infty(N; E)$ satisfy the boundary conditions then
  $\int_{\partial N} \langle \psi^1, \gamma^n \psi^2 \rangle \dvol_{\partial N}
  = 0$. For $i \in \{1,2\}$, let us write
  \begin{equation} \label{2.24}
    \psi^i = \psi^i_{++} + \psi^i_{+-} +
    \psi^i_{-+} + \psi^i_{--}.
  \end{equation}
  Then
  \begin{equation} \label{2.25}
    \langle \psi^1, \gamma^n \psi^2 \rangle =
    i \left( \langle \psi^1_{-+}, \psi^2_{++} \rangle +
    \langle \psi^1_{--}, \psi^2_{+-} \rangle \right) - i
    \left( \langle \psi^1_{++}, \psi^2_{-+} \rangle +
    \langle \psi^1_{+-}, \psi^2_{--} \rangle \right)
     = 0.
  \end{equation}
  This proves the lemma.
\end{proof}

One can check that (\ref{2.22}) gives elliptic boundary conditions in the
sense of \cite{Atiyah-Bott (1964)}. Hence there is a Fredholm operator
$D^N_+$, with domain consisting of the $H^1$-sections $\psi$ of
$E_+$ so that the boundary trace of $\psi_{++}$ equals the boundary trace
of $\psi_{--}$.
The domain of the adjoint $(D^N_+)^*$ 
consisting of the $H^1$-sections $\psi$ of
$E_-$ so that the boundary trace of $\psi_{+-}$ equals the boundary trace
of $\psi_{-+}$.
In theory one can
compute the index of $D^N_+$ using the procedure described in
\cite{Atiyah-Bott (1964)}.  However, in this case we can use a more
direct approach.
First, we can deform the metric on $N$ so that it is a product near
$\partial N$, without changing the metric on $\partial N$, and
similarly for $M$. Next, we can deform $f$, while fixing $\partial f$,
so that in a
product neighborhood
$[0, \delta) \times \partial N$ of $\partial N$, the
  map $f$ takes the form $f(t,x) = (t, (\partial f)(x))$. These
  deformations do not change the index.

Let us discuss spinors on the double of a manifold; c.f.
\cite[Section 4.4]{Freed-Hopkins (2016)}.  Suppose that
$N$ is spin and its metric is a Riemannian product near $\partial N$. Let $DN$
be the double of $N$.  As $DN$ is the boundary of $I \times N$, it inherits a
spin structure.
We can extend the structure group from $\Spin(n)$ to $\Pin_+(n)$.
The involution on $DN$ lifts to an involution $T$ on
sections of $S_N$, that commutes with the Dirac operator
$D^{DN}$. Writing a product neighborhood of $\partial N \subset DN$
as $(- \delta, \delta) \times \partial N$, the involution acts on
a spinor field by $(T\psi)(t,x) = i \epsilon \gamma^n v(-t, x)$, where
$\epsilon$ is the $\Z_2$-grading operator.
(In Minkowski space the time reversal operator involves complex
conjugation, but that is not the case here.) In terms of the
$\Z_2$-grading on $S_N$, we can write
$i \epsilon \gamma^n = \begin{pmatrix}
  0 & I \\
  I & 0
  \end{pmatrix}$.
Note that it anticommutes with $\epsilon$.
The $T$-invariant
$H^1$-regular spinors on $DN$ can then be identified with the
$H^1$-regular spinors $\psi$
on $N$ so that the boundary trace of $\psi_+ $ equals the boundary trace
of $\psi_-$. Note that these boundary conditions mix chiralities, so there
is not a well-defined index problem.  This is a reflection of the fact
that in general, Dirac-type operators on manifolds with boundary do not
admit local boundary conditions for index problems.

In our case, we can pass to doubles and
extend $f$ to a spin map $F : DN \rightarrow DM$. 
There is an involution $T$ on  $S_{DN} \otimes F^* S_{DM}$ that commutes
with the twisted Dirac operator $D^{DN}$. On a
neighborhood of $\partial N$, it
acts on sections of $S_{DN} \otimes F^* S_{DM}$ by
$(T\psi)(t,x) = - \epsilon \gamma^n \widehat{\epsilon} \widehat{\gamma}^m
\psi(-t, x)$. The $T$-invariant $H^1$-regular sections on $DN$
can be identified
with the $H^1$-regular sections of $E$, on $N$, that satisfy the
boundary conditions (\ref{2.22}). Because of the $\Z_2$-grading on
$E$, we do obtain local boundary conditions for an index problem.

Thus the index of $D^N_+$ is the same as the index of $D^{DN}_+$ when
acting on the $T$-invariant sections on the double.  We can think of
$DN/\Z_2$ as $N$ with an orbifold structure,
so we effectively have an index problem
on the orbifold.  From \cite{Kawasaki (1981)}, the index is
$\int_N \widehat{A}(N) \ch(f^* S_M)$.
(Since $\partial N$ is odd dimensional and the characteristic forms have
even degree, there is no boundary contribution.)
Here $S_M$ is $\Z_2$-graded, and
$\ch(S_M)$ equals the Euler form of $TM$.
From our present assumptions about
a product structure near the boundary, $\ch(S_M)$ vanishes near
$\partial M$ and represents
$\chi(M) [M, \partial M] \in \HH^m(M, \partial M; o_M)$. Hence the
index is $\chi(M) \int_N \widehat{A}(N) f^* [M, \partial M]$.

When combined with Propositions \ref{2.15} and \ref{2.16}, this proves Theorem
\ref{1.1}.

\section{Proof of Theorem \ref{1.3}}

In this section, we prove Theorem \ref{1.3}.  We begin more generally.
Let $N$ and $M$ be compact connected
  even dimensional Riemannian manifolds with boundary.
  Let $f \: : \: N \rightarrow M$ be a smooth spin map and let
  $\partial f \: : \: \partial N \rightarrow \partial M$
  denote the restriction to the boundary.
Suppose that
\begin{itemize}
\item $f$ is $\Lambda^2$-nonincreasing,
  \item $M$ has nonnegative curvature operator and
    $\partial N$ has vanishing mean curvature, and
    \item 
      $R_N \ge f^* R_M$.
      \end{itemize}

To avoid confusion with the previous section,
we now write the Dirac-type operator $D^N$ as
$\widetilde{D}^N$. 
Define a boundary Dirac-type operator by
\begin{equation} \label{3.1}
  \widetilde{D}^{\partial N} = - \gamma^n \sum_{\mu = 1}^{n-1}
  \gamma^\mu \nabla^N_{e_\mu} =   - \gamma^n \sum_{\mu = 1}^{n-1} \gamma^\mu
  \left( \nabla^{\partial N}_\mu + \frac14 \widehat{\omega}_{mb\mu}
  \widehat{\gamma}^m \widehat{\gamma}^b \right),
\end{equation}
where the last equality uses the fact that $H_{\partial N} = 0$; c.f.
equation (\ref{2.5}).
If $\widetilde{D}^N \psi = 0$ then (\ref{2.7}) becomes
\begin{align} \label{3.2}
0 = & 
  \int_N |\nabla^N \psi |^2 \: \dvol_N 
  + \frac14 \int_N R_N |\psi|^2 \: \dvol_N -
  \frac{1}{32} \int_N \widehat{R}_{ab \sigma \tau}  \langle \psi,
          [\gamma^\sigma, \gamma^\tau] [\widehat{\gamma}^a,
            \widehat{\gamma}^b] \psi \rangle \dvol_N +  \\
          &
          \int_{\partial N} \langle \psi, \widetilde{D}^{\partial N} \psi \rangle \dvol_{\partial N}. \notag
\end{align}

Hereafter we use the 
$\Z_2$-grading (\ref{2.21}).  In terms of it, we can write
\begin{equation} \label{3.3}
  \widetilde{D}^{\partial N} =
  \begin{pmatrix}
    - \widetilde{\slashed{\partial}}^{\partial N} & 0 \\
    0 & \widetilde{\slashed{\partial}}^{\partial N}
  \end{pmatrix}
\end{equation}
for an elliptic self-adjoint operator
$\widetilde{\slashed{\partial}}^{\partial N}$. (Here we use $\gamma^n$ to
implicitly identify $E_+ \Big|_{\partial N}$ and
$E_- \Big|_{\partial N}$.)
Let $P^{> 0}$ denote projection onto the subspace spanned by eigenvectors
of $\widetilde{\slashed{\partial}}^{\partial N}$ with positive eigenvalue, and
similarly for $P^{\le 0}$.
Given $\psi \in C^\infty(N; E)$,
let $\psi^{\partial N}_\pm$ be the components of its boundary restriction,
relative to the $\Z_2$-grading. 
We impose the boundary conditions 
\begin{equation} \label{3.4}
  P^{>0} \psi^{\partial N}_+ =   P^{\le 0} \psi^{\partial N}_- = 0. 
\end{equation}

\begin{lemma} \label{3.5}
  The operator $\widetilde{D}^N$ is formally self-adjoint under the boundary
  conditions (\ref{3.4}).
  \end{lemma}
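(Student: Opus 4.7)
The plan is to mimic the proof of Lemma \ref{2.23}. By (\ref{2.2}), it suffices to show that
\[
\int_{\partial N}\langle \psi^1, \gamma^n\psi^2\rangle\,\dvol_{\partial N} = 0
\]
whenever $\psi^1, \psi^2 \in C^\infty(N; E)$ satisfy (\ref{3.4}). Since $\gamma^n$ is odd with respect to the $\Z_2$-grading (\ref{2.21}) on $E$, only cross-grading terms survive, and the claim reduces to the vanishing of
\[
\int_{\partial N}\langle \psi^1_-, \gamma^n\psi^2_+\rangle\,\dvol_{\partial N} \quad \text{and} \quad \int_{\partial N}\langle \psi^1_+, \gamma^n\psi^2_-\rangle\,\dvol_{\partial N}.
\]

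The key ingredient is how $\gamma^n$ interacts with the boundary operator. Because $\gamma^n$ anticommutes with $\widetilde{D}^{\partial N}$ and the latter has the block form (\ref{3.3}) with diagonal entries $\mp\widetilde{\slashed\partial}^{\partial N}$, a short block-matrix computation shows that the unitary isomorphism $\gamma^n : E_+|_{\partial N} \to E_-|_{\partial N}$ intertwines the two realizations of $\widetilde{\slashed\partial}^{\partial N}$ and preserves eigenvalues: the two potential sign flips, coming from the anticommutation and from the $\mp$ in (\ref{3.3}), cancel. Consequently $\gamma^n$ carries $\Image(P^{\le 0})$ on $E_+|_{\partial N}$ onto $\Image(P^{\le 0})$ on $E_-|_{\partial N}$, and likewise for $P^{>0}$.

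Now the boundary conditions (\ref{3.4}) place $\psi^2_+ \in \Image(P^{\le 0})$ on $E_+|_{\partial N}$, so $\gamma^n\psi^2_+ \in \Image(P^{\le 0})$ on $E_-|_{\partial N}$; meanwhile $\psi^1_- \in \Image(P^{>0})$ on $E_-|_{\partial N}$. These subspaces are $L^2$-orthogonal by self-adjointness of $\widetilde{\slashed\partial}^{\partial N}|_{E_-}$, so the first cross term vanishes, and the second vanishes by the symmetric argument on $E_+|_{\partial N}$. The main thing to be careful about is the sign-tracking in the intertwining step: it is precisely the eigenvalue-preservation of $\gamma^n$ (rather than reversal) that makes the asymmetric choice of projections in (\ref{3.4}) — $P^{>0}$ on the $+$ component and $P^{\le 0}$ on the $-$ component — the correct one for self-adjointness.
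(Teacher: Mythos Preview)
Your proof is correct and follows essentially the same route as the paper: reduce via (\ref{2.2}) to the vanishing of the boundary pairing, split into the two cross terms, and use that the boundary conditions place $\psi_+$ and $\psi_-$ in complementary (hence $L^2$-orthogonal) spectral subspaces of $\widetilde{\slashed{\partial}}^{\partial N}$. The only cosmetic difference is that the paper has already absorbed your intertwining step into its conventions: just after (\ref{3.3}) it identifies $E_+|_{\partial N}$ with $E_-|_{\partial N}$ via $\gamma^n$, so that $\widetilde{\slashed{\partial}}^{\partial N}$ is literally the same operator on both sides and the matrix form $\gamma^n = \begin{pmatrix} 0 & -i \\ i & 0 \end{pmatrix}$ lets one write $\langle \psi^1, \gamma^n \psi^2 \rangle = i\langle \psi^1_-, \psi^2_+ \rangle - i\langle \psi^1_+, \psi^2_- \rangle$ directly; your explicit verification that $\gamma^n$ preserves eigenvalues of $\widetilde{\slashed{\partial}}^{\partial N}$ is exactly what justifies that identification.
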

\begin{proof}
  Using (\ref{2.2}), it suffices to show that if
  $\psi^1, \psi^2 \in C^\infty(N; E)$ satisfy the boundary conditions then
  $\int_{\partial N} \langle \psi^1, \gamma^n \psi^2 \rangle \dvol_{\partial N}
  = 0$. In terms of the $\Z_2$-grading on $E$, we can write
  $\gamma^n =
  \begin{pmatrix}
    0 & -i \\
    i & 0
    \end{pmatrix}$. Then
  \begin{equation} \label{3.6}
    \langle \psi^1, \gamma^n \psi^2 \rangle =
    i \langle \psi^1_-, \psi^2_+ \rangle - i \langle \psi^1_+, \psi^2_-
    \rangle.
    \end{equation}
  The boundary conditions imply that
  \begin{equation} \label{3.7}
    \int_{\partial N} \langle \psi^1_-, \psi^2_+ \rangle \dvol_{\partial N} =
    \int_{\partial N} \langle \psi^1_+, \psi^2_-
    \rangle \dvol_{\partial N} = 0.
    \end{equation}
  This proves the lemma.
  \end{proof}
  
The boundary conditions (\ref{3.4}) make the differential operator
$\widetilde{D}^N$ into an
elliptic self-adjoint operator, which we write as
$\widetilde{D}^{N, APS}$.  Its domain is the set of
$H^1$-regular sections $\psi$ of $E$ whose boundary trace
satisfies (\ref{3.4}).
The conditions (\ref{3.4})
differ slightly from the Atiyah-Patodi-Singer boundary conditions
\cite{Atiyah-Patodi-Singer (1975)}, which would be
$P^{\ge 0} \psi^{\partial X}_+ =   P^{<0} \psi^{\partial X}_- = 0$,
but the boundary conditions (\ref{3.4}) work just as well.

From (\ref{3.3}),
the boundary conditions imply that $\int_{\partial N} \langle
\psi, \widetilde{D}^{\partial N} \psi \rangle \dvol_{\partial N} \ge 0$.
We conclude from (\ref{3.2}) that if $\psi \in C^\infty(N; E)$ is a nonzero
solution of $\widetilde{D}^N \psi = 0$, satisfying the boundary conditions
(\ref{3.4}), then $R_N = f^* R_M$ and $\psi$ is parallel.

\begin{lemma} \label{3.8}
If $R_N \neq f^* R_M$ then the kernel of $\widetilde{D}^N$ vanishes.

Suppose that $R_N = f^* R_M$. 
The kernel of $\widetilde{D}^{N,APS}_+$ is isomorphic to the vector space of
parallel sections of $E_+$.
The kernel of $\widetilde{D}^{N,APS}_-$ vanishes.
  \end{lemma}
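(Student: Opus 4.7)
The plan is to read all three claims off the Bochner-type identity (\ref{3.2}), the pointwise estimate (\ref{2.8}), and the boundary positivity $\int_{\partial N}\langle\psi,\widetilde{D}^{\partial N}\psi\rangle\,\dvol_{\partial N}\ge 0$ implied by (\ref{3.4}) via (\ref{3.3}). For any $\psi\in\ker\widetilde{D}^{N,APS}$, each of the four terms on the right-hand side of (\ref{3.2}) is nonnegative, so each must vanish individually; in particular $\nabla^N\psi=0$ and $(R_N-f^*R_M)|\psi|^2=0$ pointwise. Since the connection on $E=S_N\otimes f^*S_M$ is metric, any parallel $\psi$ has $|\psi|$ constant, so on the connected manifold $N$ either $\psi\equiv 0$ or $\psi$ is nowhere zero.

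For part (1), if $R_N\neq f^*R_M$, the hypothesis $R_N\ge f^*R_M$ makes the difference strictly positive on a nonempty open set. If a kernel element $\psi$ were not identically zero, then $|\psi|^2$ would be a positive constant, so $(R_N-f^*R_M)|\psi|^2$ would be nonzero somewhere, contradicting the pointwise vanishing above. Hence $\ker\widetilde{D}^{N,APS}=0$.

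For parts (2) and (3), assume $R_N=f^*R_M$. For any parallel $\psi\in C^\infty(N;E)$, formula (\ref{3.1}) gives $\widetilde{D}^{\partial N}\psi|_{\partial N}=0$, and (\ref{3.3}) then places each of $\psi^{\partial N}_\pm$ in $\ker\widetilde{\slashed{\partial}}^{\partial N}$. In part (2), if $\psi\in C^\infty(N;E_+)$ is parallel then $\psi^{\partial N}_-=0$ trivially, and $\psi^{\partial N}_+$ lying in the zero eigenspace of $\widetilde{\slashed{\partial}}^{\partial N}$ gives $P^{>0}\psi^{\partial N}_+=0$; so (\ref{3.4}) holds and $\psi\in\ker\widetilde{D}^{N,APS}_+$. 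Conversely, every element of $\ker\widetilde{D}^{N,APS}_+$ is parallel by the first paragraph, so the identity map furnishes the claimed isomorphism with the space of parallel sections of $E_+$. In part (3), any $\psi\in\ker\widetilde{D}^{N,APS}_-$ is parallel in $E_-$, and the same reasoning places $\psi^{\partial N}_-$ in $\ker\widetilde{\slashed{\partial}}^{\partial N}\subset\Range P^{\le 0}$; the boundary condition $P^{\le 0}\psi^{\partial N}_-=0$ then forces $\psi^{\partial N}_-=0$, so $\psi$ vanishes somewhere and hence everywhere on the connected $N$.

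The only point demanding care is the strict/non-strict asymmetry between the projections $P^{>0}$ and $P^{\le 0}$: this is exactly what admits parallel $E_+$-sections while excluding parallel $E_-$-sections, and is the source of the discrepancy between parts (2) and (3).
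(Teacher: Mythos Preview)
Your proof is correct and follows essentially the same line as the paper's: both use the Bochner identity (\ref{3.2}) together with (\ref{2.8}) and the boundary positivity from (\ref{3.3})--(\ref{3.4}) to conclude that kernel elements are parallel, then observe that a parallel section is annihilated by $\widetilde{D}^{\partial N}$, so its boundary restriction lies in $\ker\widetilde{\slashed{\partial}}^{\partial N}$, at which point the asymmetric choice $P^{>0}$ versus $P^{\le 0}$ does the rest. Your write-up is in fact more explicit than the paper's on two points the paper leaves implicit: the passage from $\psi^{\partial N}_-=0$ to $\psi_-\equiv 0$ via constancy of $|\psi|$ on connected $N$, and the precise role of the strict/non-strict asymmetry in the projections. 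One small omission: you should invoke elliptic regularity to justify applying (\ref{3.2}) to a kernel element that is a priori only $H^1$.
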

\begin{proof}
  We have already proved the first statement of the lemma.  For the
  second statement, 
  by elliptic regularity, an element of the kernel is smooth on $N$.
  Suppose that $R_N = f^* R_M$. If $\widetilde{D}^{N,APS} \psi = 0$ then
  $\psi$ is parallel.  From the definition of $\widetilde{D}^{\partial N}$,
  it follows that $\widetilde{D}^{\partial N} \psi = 0$. Writing
  $\psi = \psi_+ + \psi_-$, the boundary conditions (\ref{3.4}) imply that
  $\psi_- = 0$. On the other hand, if $\psi_+$ is a parallel section
  of $E_+$ then $\widetilde{D}^N \psi_+ = 0$ and $\psi_+$ satisfies the
  boundary condition (\ref{3.4}). Thus $\psi_+$ is in the kernel of
  $\widetilde{D}^{N,APS}_+$.
\end{proof}

Using Lemma \ref{3.8}, the task now is to find situations which guarantee that
$\widetilde{D}^{N,APS}_+$ has a nonzero kernel or, equivalently for us, that
it has a nonzero index. One situation that is easy to analyze is when
$N = M$ and $f$ is the identity map.  Then $E$ is isomorphic to
$\Lambda^*(TM)$, with the $\Z_2$-grading distinguishing even and odd forms.
As the constant function is always a nonzero parallel section of $E$, it
lies in the kernel of $\widetilde{D}^{N,APS}_+$.

To motivate the assumptions of Theorem \ref{1.3},
if we deform the Riemannian metric on $N$, allowing the boundary metric to
also change, then the index of $\widetilde{D}^{N,APS}_+$ can change.  The
change is determined by the spectral flow of
$\widetilde{\slashed{\partial}}^{\partial N}$, which could be hard to
compute.  However, if $\widetilde{\slashed{\partial}}^{\partial N}$
doesn't change in the deformation then
the index doesn't change.  Such is the case when 
the metric and the second fundamental form of $\partial N$
do not change in the deformation.

To return to the proof of Theorem \ref{1.3}, let us write $N^\prime = M$
for the case when $N$ is the same as $M$, with $f^\prime : N^\prime
\rightarrow M$ being the identity map.
Let $N$ and $f$ be as in the statement
of Theorem \ref{1.3}.
We can assume that $\partial f = \partial f^\prime$.
By the Atiyah-Patodi-Singer index theorem
\cite{Atiyah-Patodi-Singer (1975)}
and its extension to the case of a nonproduct structure near the boundary
\cite{Gilkey (1993)}, since the boundary data is the same
for $N$ and $N^\prime$,
the difference $\Delta$ of the indices is
\begin{equation} \label{3.9}
\Delta =   \int_N \widehat{A}(TN) f^* \ch(S_M) -
\int_{N^\prime} \widehat{A}(TN^\prime) (f^\prime)^* \ch(S_M).
\end{equation}
Here $\ch(S_M)$ denotes the explicit Chern form.  It has top
degree and equals the Euler form $e(M)$. Hence 
\begin{equation} \label{3.10}
  \Delta = (\deg(f) - \deg(f^\prime)) \int_M e(M)
  \end{equation}
Since $\partial f$ and $\partial f^\prime$ are spin diffeomorphisms, it
follows that $\deg(f) = \deg(f^\prime) = 1$.
Thus
$\Delta = 0$ and
$\widetilde{D}^{N,APS}_+$ has a positive index.

This proves the first part of Theorem \ref{1.3}. The second part
follows from Proposition \ref{2.16}.

\end{document}